\documentclass[a4ppaper,10pt]{amsart}

\usepackage{enumerate}
\usepackage[latin1]{inputenc}
\usepackage{amsmath}
\usepackage{amsthm}
\usepackage{latexsym}
\usepackage{amssymb}
\usepackage{color}

\usepackage[all]{xy}
\usepackage{calc}
\usepackage{multicol}
\newtheorem{thm}{Theorem}[section]
\newtheorem{prop}{Proposition}[section]

\newtheorem{definition}{Definition}[section]
\newtheorem{lem}{Lemma}[section]

\newtheorem{remark}{Remark}[section]

\newcommand{\R}{\mathbb{R}}
\numberwithin{equation}{section}
\newcommand{\N}{\mathbb{N}}

\begin{document}

\title[Instability for 4NLS]{Strong instability of ground states to a fourth order Schr\" odinger equation}

\thanks{D. Bonheure \& J.B. Casteras are supported by INRIA - Team MEPHYSTO, MIS F.4508.14 (FNRS), PDR T.1110.14F (FNRS); J.B. Casteras is supported by the
Belgian Fonds de la Recherche Scientifique -- FNRS;
D. Bonheure is partially supported by the project ERC Advanced Grant  2013 n. 339958: ``Complex Patterns for Strongly Interacting Dynamical Systems - COMPAT'' and by ARC AUWB-2012-12/17-ULB1- IAPAS, This work has been carried out in the framework of the project NONLOCAL (ANR-14-CE25-0013),
funded by the French National Research Agency (ANR)}

\author[Bonheure, Casteras, Gou and Jeanjean]{Denis Bonheure \and Jean-Baptiste Casteras \and Tianxiang Gou \and Louis Jeanjean}

\address{Denis Bonheure, Jean-Baptiste Casteras
\newline \indent D\'epartement de Math\'ematiques, Universit\'e Libre de Bruxelles,
\newline \indent CP 214, Boulevard du triomphe, B-1050 Bruxelles, Belgium,
\newline \indent and INRIA- team MEPHYSTO.}
\email{Denis.Bonheure@ulb.ac.be}
\email{jeanbaptiste.casteras@gmail.com}

\address{Tianxiang Gou
\newline \indent Laboratoire de Math\' ematiques (UMR 6623), Universit\' e Bourgogne Franche-Comt\'e,
\newline \indent 16, Route de Gray 25030 Besan\c con Cedex, France. 
\newline \indent and
\newline \indent School of Mathematics and Statistics,
\newline \indent Lanzhou University, Lanzhou, Gansu 730000, People's Republic of China.}
\email{tianxiang.gou@univ-fcomte.fr}

\address{Louis Jeanjean
\newline \indent Laboratoire de Math\' ematiques (UMR 6623), Universit\' e Bourgogne Franche-Comt\'e.
\newline \indent 16, Route de Gray 25030 Besan\c con Cedex, France.}
\email{louis.jeanjean@univ-fcomte.fr}


\begin{abstract}
In this note we prove the instability by blow-up of the ground state solutions for a class of fourth order Schr\" odinger equations. This extends the first rigorous results on blowing-up solutions for the biharmonic NLS due to Boulenger and Lenzmann \cite{BoLe} and confirm numerical conjectures from \cite{BaFi, BaFiMa1, BaFiMa, FiIlPa}.

\end{abstract}
\subjclass[2010]{35Q55,35B35,35J50,35Q60,35Q40}

\keywords{Fourth order Schr\"odinger equation ; strong instability ; ground state solution ; blowing up solutions.}

\maketitle

\section{Introduction}

In this note we are concerned with the following biharmonic NLS equation
\begin{equation}
\label{4nlsdis}
\begin{cases}
i \partial_t  \psi-\gamma \Delta^2 \psi + \mu\Delta \psi +|\psi|^{2\sigma} \psi =0,\\
\psi (0,x)=u_0 (x) \in H^2 (\R^N).
\end{cases}
\end{equation}
Here $\gamma >0, \, \mu \geq 0$ are given parameters and $0 < N \sigma < 4^*$, where we agree that
$4^* := \frac{4N}{(N-4)^+},$ namely $ 4^*= \infty \mbox{ if } N\leq 4,$ and $4^* = \frac{4N}{N-4} \mbox{ if } N \geq 5.$
By $H^2 (\R^N)$ we mean $H^2 (\R^N;\mathbb C)$, so that the solution is a complex valued function.  For $1 \leq q \leq \infty,$ we denote by $L^q(\R^N)$  the usual Lebesque space with norm $||u||_q^q:= \int_{\R^N}|u|^q \, dx. $ The space $H^2 (\R^N)$ is equipped with its standard norm.

\medskip

We recall that standing waves, or so-called waveguides in optics, to \eqref{4nlsdis} are solutions of the form 
$$\psi (t,x)=e^{i\omega t} u(x),\quad \omega \in \R.$$ 
The function $u$ then satisfies the following elliptic equation
\begin{equation}
\label{4nls}
\gamma \Delta^2 u - \mu \Delta u+\omega u=|u|^{2\sigma}u,\quad  u \in H^2(\R^N).
\end{equation}
It is well known that NLS, namely \eqref{4nlsdis} with $\gamma=0$ and $\mu=1$, can become singular at finite time, see for instance \cite{FiIlPa} and the classical references therein. Karpman and Shagalov \cite{KaSh} were apparently the first to study the regularization and stabilization effect of a small fourth-order dispersion. One of their results shows, by help of some stability analysis and numerical computations, that when $N\sigma\le 2$, the standing waves are stable for all $\gamma$, when $2<N\sigma<4$ they are stable for small values of $\gamma$ and that they are unstable when $\sigma N >4$.  Adding a small fourth-order dispersion term thus leads to a new critical exponent/dimension. In particular, the Kerr nonlinearity $\sigma=1$ becomes subcritical in dimension $2$ and $3$ which is obviously an important feature of this extended model e.g. for its relevance in optics. 

In \cite{FiIlPa}, Fibich et al. also motivated the study of \eqref{4nlsdis} by recalling that NLS arises from NLH (the nonlinear Helmholtz equation) as a paraxial approximation. Since NLS can become singular at a finite time, this suggests that some of the small terms neglected in the approximation, plays in fact a role to prevent the blow up. Fibich et al. addressed naturally the question whether nonparaxiality prevents the collapse as the small fourth-order dispersion coefficient $\gamma$ is shown to be part of the nonparaxial correction to NLS. 

In \cite{FiIlPa}, global existence in time is shown by applying the arguments of Weinstein \cite{We}. The role of the new critical exponent $\sigma =4/N$ with respect to global existence is discussed.  The necessary Strichartz estimates were shown by Ben-Artzi et al. \cite{BAKoSa}.

One central question that arises concerning the dynamics of \eqref{4nlsdis} is the stability of its ground state standing waves. A standing wave $e^{i\omega t} u(x)$ is called a ground state if $u$ minimizes 
(among all standing wave solutions with the same frequency $\omega$) the action functional 
 $E_w:H^2(\R^N)\to \R$ defined by 
\begin{align*}
E_w (u)&=  \frac{\gamma}{2}\|\Delta u\|_2^2+ \frac{\mu}{2} \|\nabla u\|_2^2+ \frac{\omega}{2} \|u\|_2^2 - \frac{1}{2\sigma+2 }\|u\|_{2\sigma +2}^{2\sigma+2}\\
&: = E_0(u) + \frac{\omega}{2} \| u\|_2^2.
\end{align*}
The ground state energy is thus given by
\begin{equation}
\label{enerGS}
d_{\omega}:= \inf \{E_{\omega}(u) \ | \ u\in H^2(\R^N) \backslash \{0\},\ E_{\omega}^\prime (u)=0  \}.
\end{equation}
The ground state level $d_{\omega}$ is known to be reached under general assumptions \cite{BoCaSaNa, BoNa}. It also enjoys various alternative characterizations that we comment below. \medskip

In the case $0 < \sigma N < 4$, which is referred to as the mass-subcritical case, the global existence for the Cauchy problem holds, see \cite{FiIlPa,Pa}  and it is conjectured that ground state solutions are orbitally stable. This is proved in \cite{BoCaSaNa} (see also \cite{NaPa}) under additional assumptions among which the fact that they are non degenerate, see \cite{BoCaSaNa} for a precise statement. \medskip

Here we focus on the mass-critical case $\sigma N =4$ and the mass super-critical case $4 < \sigma N < 4^*$. In this range several numerical studies \cite{BaFi, BaFiMa1, BaFiMa} led to conjecture the existence of blowing-up solutions but the first analytical results were only recently obtained in \cite{BoLe}. In \cite{BoLe} however except in the case where $\mu =0$ and for a mass-supercritical nonlinearity it is assumed that $E_0(u_0) <0$ where $u_0$ is the initial datum.  We refer to Theorems 1 and Theorem 3 of \cite{BoLe} for precise statements.  As we shall see any ground state $u \in H^2(\R^N)$ to \eqref{4nls}  do satisfy $E_0(u) >0$ and thus the main results of \cite{BoLe} are not directly applicable.\medskip

In this note, under general assumptions, we prove that if a ground state is radially symmetric then it is unstable by blow-up in finite time. 

\begin{definition}\label{unstable}
We say that a solution $ u \in H^2(\R^N)$ to \eqref{4nls} is unstable by blow-up in finite (respectively infinite) time if, for all $\varepsilon>0$, there exists $v\in H^2 (\R^N)$ such that 
$\|v - u \|_{H^2}<\varepsilon$ and the solution $\phi(t)$ to \eqref{4nlsdis} with initial data $\phi (0)=v$ blows up in finite (respectively infinite) time in the $H^2$ norm.
\end{definition}

We now state our  main results.

\begin{thm}
\label{mainthm1}
Suppose $ N \geq 2$, $\gamma >0$, $\omega>0$ and $\sigma \leq 4$. If $\mu > 0$ and $4\leq \sigma N < 4^*$ or $\mu = 0$ and $4 < \sigma N < 4^*$, then any radial ground state is unstable by blow-up in finite time. 
\end{thm}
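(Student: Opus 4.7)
The plan is to follow the Berestycki--Cazenave / Ohta variational scheme for strong instability, combined with the radial localized virial identity of Boulenger--Lenzmann \cite{BoLe}. First I would introduce the $L^2$-invariant dilation $u_\lambda(x):=\lambda^{N/2}u(\lambda x)$ and its associated Pohozaev functional
\begin{equation*}
Q(u) := \left.\frac{d}{d\lambda} E_\omega(u_\lambda)\right|_{\lambda=1} = 2\gamma \|\Delta u\|_2^2 + \mu \|\nabla u\|_2^2 - \frac{N\sigma}{2\sigma+2}\|u\|_{2\sigma+2}^{2\sigma+2}.
\end{equation*}
Combining the Pohozaev identity for \eqref{4nls} with the Nehari identity $\langle E_\omega'(u),u\rangle=0$ yields $Q(\phi)=0$ for every ground state $\phi$. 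The key variational fact to establish is the characterization
\begin{equation*}
d_\omega = \inf\{E_\omega(v) \mid v \in H^2(\R^N)\setminus\{0\},\ Q(v) = 0\},
\end{equation*}
which should follow from a fibering analysis of $\lambda\mapsto E_\omega(u_\lambda)$, noting that in the mass-(super)critical regime this map has a unique critical point, a global maximum.

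Next, for a radial ground state $\phi$ a direct computation gives
\begin{equation*}
\left.\frac{d^2}{d\lambda^2} E_\omega(\phi_\lambda)\right|_{\lambda=1} = (8-2N\sigma)\gamma\|\Delta\phi\|_2^2 + (2-N\sigma)\mu\|\nabla\phi\|_2^2,
\end{equation*}
which is strictly negative under the hypotheses of the theorem -- the borderline case $N\sigma=4$ being saved precisely by $\mu>0$. Thus $\lambda=1$ is a strict local maximum of $\lambda\mapsto E_\omega(\phi_\lambda)$, and for $\lambda>1$ close to $1$ the radial functions $\phi_\lambda$ form an $H^2$-approximating family of $\phi$ satisfying $\|\phi_\lambda\|_2=\|\phi\|_2$, $E_\omega(\phi_\lambda)<d_\omega$, and $Q(\phi_\lambda)<0$. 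I would then consider the set
\begin{equation*}
\mathcal{B} := \{v \in H^2_{\textup{rad}} \mid \|v\|_2 = \|\phi\|_2,\ E_\omega(v) < d_\omega,\ Q(v) < 0\},
\end{equation*}
and show it is invariant under the flow of \eqref{4nlsdis}: mass and $E_\omega$ are conserved, radial symmetry is preserved, and a continuity argument rules out $Q(\psi(t))$ crossing zero (since at a crossing time the variational characterization would force $E_\omega(\psi(t))\geq d_\omega$).

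For the conclusion, for any $v\in\mathcal{B}$ I would derive a gap estimate of the form $Q(v) \leq c\,(E_\omega(v)-d_\omega)<0$, obtained by examining the fibering curve $\lambda\mapsto E_\omega(v_\lambda)$ and comparing with the variational characterization of $d_\omega$. Applied along the trajectory, this produces a uniform bound $Q(\psi(t))\leq -\eta<0$. I would then invoke the localized radial virial identity of Boulenger--Lenzmann, namely an estimate of the form
\begin{equation*}
\frac{d^2}{dt^2} M_R[\psi(t)] \leq 8Q(\psi(t)) + \textup{Err}(R,\psi(t)),
\end{equation*}
where $M_R$ is the variance with a radial cutoff at scale $R$ and the error decays in $R$ thanks to Strauss-type radial Sobolev bounds together with mass conservation. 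Choosing $R$ large enough, the error is absorbed by the uniform negativity of $8Q$, yielding $\frac{d^2}{dt^2}M_R[\psi(t)]\leq -4\eta$; since $M_R\geq 0$, this forces a finite maximal time and hence $H^2$ blow-up.

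The main obstacle is the mass-critical case $N\sigma=4$: here the quartic term in the fibering cancels and the strict negativity of both $\frac{d^2}{d\lambda^2}E_\omega(\phi_\lambda)|_{\lambda=1}$ and of $Q(\psi(t))$ rests \emph{entirely} on the gradient contribution $\mu\|\nabla u\|_2^2$. One must therefore balance $\eta$ against the error terms in the Boulenger--Lenzmann virial, which demands uniform control of $\|\psi(t)\|_{H^1}$ on the invariant set and a careful choice of $R$. A secondary subtlety is the variational characterization of $d_\omega$ via the $L^2$-invariant constraint $Q=0$ (rather than the Nehari constraint), which must be shown to coincide with the ground state level of \eqref{enerGS} using the ground state theory of \cite{BoCaSaNa, BoNa}.
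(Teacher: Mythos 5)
Your variational setup (the fibering map, the characterization of $d_\omega$ through the constraint $Q=0$, the flow-invariant set where $E_\omega<d_\omega$, $Q<0$, and the gap estimate giving $Q(\psi(t))\leq-\eta$) is essentially the paper's Lemma \ref{unique}, Proposition \ref{propmin} and Steps 1--2 of the proof, and it is sound (the paper adds the Nehari constraint $I_\omega\le 0$ to the set, but your pure-$Q$ version can be made to work by the same fibering argument). The genuine gap is in the concluding virial step. First, for the fourth order equation the Boulenger--Lenzmann quantity $M_{\varphi_R}[u]=2\,\mathrm{Im}\int\bar u\,\nabla\varphi_R\cdot\nabla u\,dx$ is \emph{not} the time derivative of the nonnegative localized variance $\int\varphi_R|\psi|^2$: the mass current of \eqref{4nlsdis} contains third-order terms coming from $\Delta^2$, so the Glassey-type convexity argument you invoke (``$\tfrac{d^2}{dt^2}M_R\le-4\eta$ and $M_R\ge0$ force finite time'') is not available. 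What Lemma \ref{locvir} gives is a bound on $\tfrac{d}{dt}M_{\varphi_R}$, and $M_{\varphi_R}$ has no sign; a uniform bound $\tfrac{d}{dt}M_{\varphi_R}\le -4\eta$ only yields $M_{\varphi_R}\to-\infty$ linearly, hence (via $|M_{\varphi_R}|\lesssim\|\psi\|_2^{3/2}\|\Delta\psi\|_2^{1/2}$) a polynomial lower bound on $\|\Delta\psi(t)\|_2$, i.e.\ blow-up in finite \emph{or infinite} time --- exactly the weaker conclusion of Theorem \ref{mainthm11}, not the finite-time blow-up asserted here.

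Second, your plan to absorb the error terms ``thanks to Strauss-type bounds together with mass conservation'' and ``uniform control of $\|\psi(t)\|_{H^1}$ on the invariant set'' cannot be carried out: there is no a priori uniform $H^1$ bound on the trajectory (the solution is precisely the one expected to blow up), and the errors in \eqref{blevt} contain $\|\nabla\psi(t)\|_2^2/R^2$ and $\|\nabla\psi(t)\|_2^{\sigma}/R^{\sigma(N-1)}$, which no fixed choice of $R$ controls uniformly in $t$. The paper resolves this by proving the \emph{coercive} estimate $\tfrac{d}{dt}M_{\varphi_R}[\psi(t)]\le-\delta\|\nabla\psi(t)\|_2^2$ (respectively $-\delta\|\Delta\psi(t)\|_2^2$ when $\mu=0$): it splits into the regimes where $\|\nabla\psi(t)\|_2^2$ is below or above a fixed threshold, uses $\|\nabla\psi\|_2^2\le\|\psi\|_2\|\Delta\psi\|_2$ to generate a good term $-c\|\nabla\psi\|_2^4$, and absorbs $\|\nabla\psi\|_2^{\sigma}/R^{\sigma(N-1)}$ into it --- this is precisely where the hypothesis $\sigma\le4$ (and $N\ge2$) enters, a hypothesis your sketch never uses, which signals the missing step. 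The coercive bound combined with $|M_{\varphi_R}[\psi(t)]|\le C\|\nabla\psi(t)\|_2$ then yields a Riccati inequality $z'\gtrsim z^2$ for $z(t)=\int_{t_1}^t|M_{\varphi_R}[\psi(s)]|^2\,ds$, and it is this ODE argument, not convexity of a variance, that produces finite-time blow-up. Without replacing your final step by such a quantitative mechanism, the proof does not reach the stated conclusion.
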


We can only prove a weaker statement when $\sigma>4$ and in the critical case $\mu = 0$ and $\sigma N=4$. 

\begin{thm}
\label{mainthm11}
Suppose $\gamma >0$ and $\omega>0$. Assume either 
$\mu = 0$, $\sigma N=4$ and $ N \geq 2$, 
or $N\in\{2,3,4\}$, $\sigma > 4$ and $\mu \ge 0$. Then any radial ground state is unstable by blow-up in finite or infinite time.
\end{thm}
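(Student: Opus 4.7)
The plan is to adapt the classical variational-virial scheme to the biharmonic setting, as developed in \cite{BoLe}. Let $u$ be a radial ground state of \eqref{4nls}. The $L^2$-preserving scaling $u^\lambda(x) := \lambda^{N/2} u(\lambda x)$ leads to the Pohozaev-type functional
\[
Q(v) := 2\gamma\|\Delta v\|_2^2 + \mu\|\nabla v\|_2^2 - \frac{N\sigma}{2\sigma+2}\|v\|_{2\sigma+2}^{2\sigma+2},
\]
which satisfies $Q(v) = \frac{d}{d\lambda}E_\omega(v^\lambda)\big|_{\lambda=1}$ and vanishes on every finite-energy solution of \eqref{4nls}. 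Combined with the ground-state property of $u$, this yields the variational characterization $d_\omega = \inf\{E_\omega(v) : v\in H^2\setminus\{0\},\ Q(v) = 0\}$.

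For every $\varepsilon > 0$ I construct a radial perturbation $v_\varepsilon$ with $\|v_\varepsilon - u\|_{H^2} < \varepsilon$, $E_\omega(v_\varepsilon) < d_\omega$ and $Q(v_\varepsilon) < 0$. When $\sigma > 4$ the map $\lambda\mapsto E_\omega(u^\lambda)$ is strictly concave past $\lambda=1$ and $Q(u^\lambda) < 0$ for $\lambda>1$, so $v_\varepsilon := u^{1+\varepsilon}$ does the job. In the mass-critical case $\mu=0,\ \sigma N = 4$, both $E_\omega$ and $Q$ are invariant along the orbit $\lambda\mapsto u^\lambda$, and the degeneracy must be broken through a two-parameter dilation such as $v_\varepsilon := (1+\varepsilon)u^{1+\varepsilon}$, whose nonlinear contribution carries an unmatched power. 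Mass and energy conservation together with the Pohozaev characterization above then show that the set
\[
\mathcal{A} := \{\phi\in H^2_{\mathrm{rad}}(\R^N) : E_\omega(\phi) < d_\omega,\ Q(\phi) < 0\}
\]
is flow-invariant: if some orbit crossed $\{Q = 0\}$ at time $t_0$, then $\phi(t_0)$ would compete in the infimum above and force $E_\omega(\phi(t_0)) \geq d_\omega$, contradicting the strict inequality preserved along the flow. A standard scaling argument along $\lambda\mapsto Q(\phi^\lambda)$ further upgrades this to a uniform gap $Q(\phi(t)) \leq -\delta < 0$ on each orbit inside $\mathcal{A}$.

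The decisive step is the radial localized virial estimate of Boulenger--Lenzmann. For a radial cutoff $\varphi_R$ supported in $B_{2R}$ and $M_R[\psi] := \int\varphi_R(x)|\psi|^2\,dx$, one derives a bound of the form
\[
\frac{d^2}{dt^2}M_R[\psi(t)] \leq c_1\, Q(\psi(t)) + \mathrm{Err}(R,t),
\]
with $|\mathrm{Err}(R,t)| \leq o_R(1)\cdot(1 + \|\Delta\psi(t)\|_2^{\alpha})$ for an interpolation exponent $\alpha=\alpha(\sigma,N)$ arising from radial Strauss estimates on the remainder terms. The main obstacle, and the reason Theorem \ref{mainthm11} is weaker than Theorem \ref{mainthm1}, lies in the borderline value of $\alpha$: in the present regimes, either the scaling identity $\sigma N = 4$ (when $\mu=0$) or the large exponent $\sigma > 4$ (when $N \leq 4$) forces $\alpha \geq 2$, and the remainder can no longer be absorbed uniformly in $\|\Delta\psi(t)\|_2$. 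The conclusion is then a dichotomy: if the solution $\psi$ issued from $v_\varepsilon$ were globally defined with $\sup_{t\geq 0}\|\Delta\psi(t)\|_2 < \infty$, then for $R$ sufficiently large the above estimate would yield $\frac{d^2}{dt^2}M_R(t) \leq -c_1\delta/2$, driving $M_R(t)\to -\infty$ in contradiction with $M_R \geq 0$. Hence the maximal solution either blows up in finite time or satisfies $T_{\max} = +\infty$ with $\limsup_{t\to\infty}\|\Delta\psi(t)\|_2 = +\infty$, which is precisely instability by blow-up in finite or infinite time in the sense of Definition \ref{unstable}.
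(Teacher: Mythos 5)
Your overall scheme -- perturb the ground state along scalings so that $E_\omega<d_\omega$ and $Q<0$, show this set is invariant under the flow, upgrade to a uniform gap $Q(\phi(t))\le-\delta$, and feed it into the Boulenger--Lenzmann radial virial estimate to obtain the finite-or-infinite-time dichotomy -- is the same as the paper's. But two load-bearing steps are asserted rather than proved. First, you discard the functional $I_\omega$ altogether and base the invariance of $\mathcal A$ on the characterization $d_\omega=\inf\{E_\omega(v):v\ne0,\ Q(v)=0\}$. The inequality $\inf\le d_\omega$ is immediate, but the direction you actually use -- every $v\ne0$ with $Q(v)=0$ satisfies $E_\omega(v)\ge d_\omega$ -- does not follow from ``$Q$ vanishes on solutions plus the ground-state property''; it is a genuine claim needing the fiber-map machinery. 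The paper proves only the weaker Proposition \ref{propmin} (infimum over $\mathcal{M}_\omega=\{Q=0,\ I_\omega\le0\}$) and compensates by also propagating $I_\omega(\phi(t))<0$ along the flow via \eqref{ground}, so that a crossing of $\{Q=0\}$ lands in $\mathcal{M}_\omega$. You must either prove your stronger characterization (possible: given $Q(v)=0$, rescale to $\lambda_1$ with $I_\omega(v_{\lambda_1})=0$ and use that $\lambda=1$ maximizes $\lambda\mapsto E_\omega(v_\lambda)$) or reinstate the $I_\omega<0$ invariant. Relatedly, in the case $\mu=0$, $\sigma N=4$ your ``standard scaling argument along $\lambda\mapsto Q(\phi^\lambda)$'' cannot produce the uniform gap, since $Q(\phi^\lambda)=\lambda^4Q(\phi)$ never changes sign; there the gap comes from conservation, because $Q=2E_0$ and $E_0(\phi(t))=E_0(v_\varepsilon)<0$ is constant in time. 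Your choice $v_\varepsilon=(1+\varepsilon)u^{1+\varepsilon}$ does work, but the inequality $E_\omega(v_\varepsilon)<d_\omega$ is not ``an unmatched power'': the first-order terms in $\varepsilon$ cancel and one needs $I_\omega(u)=0$ plus a second-order expansion (a pure amplitude perturbation $(1+\varepsilon)u$ is cleaner).

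Second, your virial quantity is the wrong one for the fourth-order equation. For \eqref{4nlsdis} the time derivative of the localized variance $\int\varphi_R|\psi|^2\,dx$ is \emph{not} $2\,\mathrm{Im}\int\bar\psi\,\nabla\varphi_R\cdot\nabla\psi\,dx$: the mass current contains third-derivative terms coming from $\Delta^2$, so the inequality you attribute to Boulenger--Lenzmann is not established for your $M_R$, and your final contradiction leans precisely on the sign $M_R\ge0$, which is special to that unjustified choice. The estimate that is actually available (Lemma \ref{locvir}) bounds the \emph{first} time derivative of $M_{\varphi_R}[\psi]=2\,\mathrm{Im}\int\bar\psi\,\nabla\varphi_R\cdot\nabla\psi\,dx$. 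The repair is what the paper does: under your hypothesis $\sup_t\|\Delta\psi(t)\|_2<\infty$, Cauchy--Schwarz and mass conservation give $|M_{\varphi_R}[\psi(t)]|\lesssim\|\psi\|_2^{3/2}\|\Delta\psi(t)\|_2^{1/2}$ bounded, while $\frac{d}{dt}M_{\varphi_R}[\psi(t)]\le-c<0$ (from \eqref{virlimitcase} when $\mu=0$, $\sigma N=4$, and from \eqref{blevt} together with the boundedness of $\|\nabla\psi(t)\|_2$ when $\sigma>4$, where the troublesome error is $\|\nabla\psi\|_2^\sigma/R^{\sigma(N-1)}$) forces $M_{\varphi_R}\to-\infty$, a contradiction; this yields the same dichotomy you state. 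With these two steps repaired your argument coincides with the paper's.
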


\begin{remark}\label{rq1}
We do not require in Theorems \ref{mainthm1} and \ref{mainthm11} that all ground state solutions are radially symmetric. Indeed, as it will be clear, the arguments of our proofs remain unchanged if one replaces $H^2(\R^N)$ by its subspace $H_{rad}^2(\R^N)$ of radially symmetric functions. Note also that our results apply to the solutions which are ground states of $E_{\omega}$ considered on $H_{rad}^2(\R^N)$.
\end{remark}

This restriction to radial data originated from the use of key results from \cite{BoLe}. To handle a ground state we thus need to know if it is radially symmetric. \medskip

We now recall the results of existence of radial  ground state solutions to \eqref{4nls} from \cite{BoCaSaNa, BoLe}.

\begin{thm}
\label{thm1.1bonnas}
Assume $\gamma >0,\, \mu \geq 0, \, \omega>0$ and $ 0 <\sigma N< 4^*$. Then \eqref{4nls} has a ground state solution.  In addition
\begin{enumerate}
\item If $ \mu \geq 2 \sqrt{\gamma\omega}$, then any ground state solution  is radially symmetric around some point.
\item If $\sigma \in \N$ then at least one ground state solution is radially symmetric around some point.
\end{enumerate}
\end{thm}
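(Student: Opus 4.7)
The plan is to establish existence first, then treat the two symmetry statements separately.

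\textbf{Existence.} I would use a constrained minimization. Since $\omega,\gamma>0$ and $\mu\geq 0$, the quadratic form $Q(u):=\gamma\|\Delta u\|_2^2+\mu\|\nabla u\|_2^2+\omega\|u\|_2^2$ is equivalent to the $H^2(\R^N)$-norm, and the Sobolev subcriticality $\sigma N<4^*$ gives a continuous embedding $H^2(\R^N)\hookrightarrow L^{2\sigma+2}(\R^N)$, so
\[
m:=\inf\bigl\{Q(u):u\in H^2(\R^N),\ \|u\|_{2\sigma+2}^{2\sigma+2}=1\bigr\}>0.
\]
A minimizing sequence $(u_n)$ is bounded in $H^2(\R^N)$; a Lions concentration-compactness argument applied to $|u_n|^2\,dx$ extracts, up to translations, a nontrivial weak limit $v$, and the Brezis--Lieb lemma rules out dichotomy. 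Hence $v$ realizes $m$, a Lagrange multiplier computation shows a rescaled copy $u=\alpha v$ solves \eqref{4nls}, and a standard Nehari--Pohozaev comparison identifies $u$ with a ground state in the sense of \eqref{enerGS}.

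\textbf{Radial symmetry, case (1).} When $\mu\geq 2\sqrt{\gamma\omega}$, the symbol $\gamma|\xi|^4+\mu|\xi|^2+\omega$ factorizes as $\gamma(|\xi|^2+a_1)(|\xi|^2+a_2)$ with $a_1,a_2>0$ determined by $a_1a_2=\omega/\gamma$ and $a_1+a_2=\mu/\gamma$. Correspondingly $L:=\gamma\Delta^2-\mu\Delta+\omega=\gamma(-\Delta+a_1)(-\Delta+a_2)$ and its Green's function $G$ is the convolution of two positive, radial, strictly decreasing Bessel potentials, so $G$ itself is positive, radial and strictly radially decreasing. Rewriting \eqref{4nls} as the fixed-point equation $u=G\ast(|u|^{2\sigma}u)$ and combining it with the variational characterization of $d_\omega$, the extremal problem reduces to an inequality involving the bilinear integral $\int u\,G\ast(|u|^{2\sigma}u)\,dx$ under an $L^{2\sigma+2}$ normalization. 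Riesz's rearrangement inequality, together with the strict radial monotonicity of $G$, then forces every ground state to coincide with its Schwarz symmetrization up to a translation, and hence to be radial around some point.

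\textbf{Radial symmetry, case (2).} When $\sigma$ is a positive integer, $2\sigma+2$ is an even integer, and I would invoke the Fourier rearrangement $u\mapsto u^\sharp:=\cF^{-1}\bigl((|\widehat u|)^\ast\bigr)$ introduced by Lenzmann and Sok. Plancherel gives $\|u^\sharp\|_2=\|u\|_2$; since $|\xi|^4$ and $|\xi|^2$ are radial increasing weights, $\|\Delta u^\sharp\|_2\leq\|\Delta u\|_2$ and $\|\nabla u^\sharp\|_2\leq\|\nabla u\|_2$; and the key Lenzmann--Sok estimate $\|u^\sharp\|_{2k}\geq\|u\|_{2k}$ for every even integer $2k$ (proved via Plancherel plus Riesz rearrangement applied to the iterated convolution $\widehat u\ast\cdots\ast\widehat u$) applies to $2k=2\sigma+2$. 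Consequently $E_\omega(u^\sharp)\leq E_\omega(u)$ on the natural Nehari/constraint used to characterize $d_\omega$, and since $u^\sharp$ has a nonnegative, radial Fourier transform by construction, $u^\sharp$ is itself radial. Replacing any ground state by $u^\sharp$ thus produces a radial ground state.

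\textbf{Main obstacle.} The most delicate point is to identify a minimizer of the constrained problem with a ground state in the sense of \eqref{enerGS}: this requires a tight interplay between Pohozaev and Nehari identities and the scaling structure of $E_\omega$, in particular to ensure that the Lagrange multiplier has the correct sign and that no critical point has strictly smaller energy. A secondary obstacle is the equality analysis in the rearrangement steps: case (1) relies on the strict radial monotonicity of $G$ within Riesz's inequality, while case (2) requires simultaneous rigidity of the Fourier rearrangement on the three terms $\|\Delta u\|_2$, $\|\nabla u\|_2$ and $\|u\|_{2\sigma+2}$.
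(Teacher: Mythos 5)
Your proposal is correct in outline and follows essentially the same route as the paper, which does not reprove this theorem but refers to \cite{BoCaSaNa} (Theorem 3.3) and \cite{BoNa} for existence and for case (1) -- i.e. the factorization of $\gamma\Delta^2-\mu\Delta+\omega$ into two operators with positive, radially decreasing kernels when $\mu\ge 2\sqrt{\gamma\omega}$, combined with (strict) Riesz rearrangement in the dual formulation -- and to the Fourier symmetrization of \cite[Appendix A]{BoLe} combined with the variational argument of \cite{BoCaSaNa} for case (2) with $\sigma\in\N$. Since your constrained-minimization existence argument and both symmetry arguments are precisely those of the cited works, the proposal matches the paper's (cited) proof.
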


Except for the statement in the case where $\sigma \in \N$ Theorem \ref{thm1.1bonnas} is \cite[Theorem 3.3]{BoCaSaNa}, see also \cite[Theorem 1.1]{BoNa}. The result when $\sigma \in \N$ follows immediately combining the argument of the proof of {\cite[Theorem 1.2]{BoCaSaNa}} with the result of \cite[Appendix A]{BoLe} on the Fourier symmetrization, see \cite[Proposition 3.1]{BoCaSaNa}.

Taking into account Theorem \ref{thm1.1bonnas}, we obtain the following corollary of Theorem \ref{mainthm1}.

\begin{thm}
\label{mainthm2}
Suppose $N \geq 2, \, \gamma >0,\, \mu \geq 0, \, \omega>0$ and $4\leq \sigma N < 4^*$. Then \eqref{4nls} admits ground state solutions and 
\begin{enumerate}
\item If $ \mu \geq 2\sqrt{\gamma\omega}$ and $\sigma \leq 4$, any ground state solution is unstable  by blow-up in finite time. 
\item If $ \mu \geq 2\sqrt{\gamma\omega}$ and $\sigma > 4$, any ground state solution is unstable by blow-up in finite or infinite time. 
\item If $\sigma \in \N$ there exists at least one ground state solution which is unstable by blow-up in finite time (finite or infinite time if either $\mu=0$ and $\sigma N=4$ or $N\in\{2,3,4\}$, $\sigma > 4$ and $\mu \ge 0$). 

\end{enumerate}
\end{thm}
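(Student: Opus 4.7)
The plan is to treat Theorem \ref{mainthm2} purely as a consequence of the earlier results: existence comes from Theorem \ref{thm1.1bonnas}, and instability is deduced by feeding the radiality information provided by that theorem into Theorem \ref{mainthm1} or Theorem \ref{mainthm11}. So the work reduces to matching hypotheses in each of the three items; no new analytic input is needed.

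For item (1), I would first invoke Theorem \ref{thm1.1bonnas}(1) under the assumption $\mu \geq 2\sqrt{\gamma\omega}$ to conclude that \emph{every} ground state solution of \eqref{4nls} is (up to translation) radially symmetric. Since we also assume $N\ge 2$, $4\le \sigma N<4^*$ and $\sigma\le 4$, with either $\mu>0$ and $4\le \sigma N<4^*$ or $\mu=0$ and $4<\sigma N<4^*$ (here $\mu\ge 2\sqrt{\gamma\omega}>0$ since $\omega>0$, so the first alternative is automatic), Theorem \ref{mainthm1} applies to each such radial ground state and yields instability by blow-up in finite time.

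For item (2), Theorem \ref{thm1.1bonnas}(1) again forces every ground state to be radial. The small arithmetic check to record here is that $\sigma>4$ combined with $\sigma N<4^*$ forces $N\in\{2,3,4\}$: indeed, for $N\ge 5$ one has $4^*=\frac{4N}{N-4}$, so $\sigma<\frac{4}{N-4}\le 4$ already at $N=5$, contradicting $\sigma>4$. Hence the hypotheses of Theorem \ref{mainthm11} (the $\sigma>4$ branch, with any $\mu\ge 0$) are satisfied, and each radial ground state is unstable by blow-up in finite or infinite time.

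For item (3), when $\sigma\in\N$ we instead invoke Theorem \ref{thm1.1bonnas}(2), which provides \emph{at least one} radial ground state. Applying Theorem \ref{mainthm1} to that particular ground state gives instability by blow-up in finite time; in the two exceptional regimes listed ($\mu=0$ with $\sigma N=4$, or $N\in\{2,3,4\}$ with $\sigma>4$ and $\mu\ge 0$), Theorem \ref{mainthm1} is replaced by Theorem \ref{mainthm11}, giving the weaker conclusion of blow-up in finite or infinite time. There is really no obstacle to speak of here; the only point requiring a moment of care is the dimensional bookkeeping in (2), and the implicit understanding (used in all three items) that the instability statements of Theorems \ref{mainthm1}--\ref{mainthm11} are stated per radial ground state, so that applying them to each element (or one element, in the case of (3)) of the set of radial ground states produced by Theorem \ref{thm1.1bonnas} gives exactly the claim.
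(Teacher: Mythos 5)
Your proposal is correct and takes essentially the same route as the paper, which states Theorem \ref{mainthm2} as a direct corollary of Theorems \ref{mainthm1} and \ref{mainthm11} combined with the existence/radial-symmetry result of Theorem \ref{thm1.1bonnas}, with no further argument given. Your explicit check that $\sigma>4$ together with $\sigma N<4^*$ forces $N\in\{2,3,4\}$, and the remark that a ground state radial about some point can be translated to one radial about the origin, are exactly the (correct) bookkeeping details the paper leaves implicit.
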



Our extension, Theorem \ref{mainthm1}, of the results of \cite{BoLe} relies in particular on an appropriate variational characterization of the ground states. Actually we adapt here the approach of \cite{CoJeSq} which itself was based on \cite{Le} where a new light on the classical approach of Berestycki-Cazenave \cite{BeCa} was given. \medskip

The note is organized as follows. In Section \ref{sec:prelim} we first derive some identities satisfied by any solution to (\ref{4nls}) and then we establish an alternative variational characterization of the ground states. In the same section, we recall some facts on the local existence for the Cauchy problem and a blow-up alternative. The proofs of Theorems \ref{mainthm1} and \ref{mainthm11} are given in Section \ref{sec:proof}. \medskip

{\bf Acknowlegments} We thank Enno Lenzmann for useful comments on our first version of this manuscript.

\section{Preliminary results.}\label{sec:prelim}

\subsection{Some identities satisfied by the solutions to (\ref{4nls})} 
To begin with, we derive some classical identities satisfied by the solutions to \eqref{4nls}. They permit to show, in particular, that our ground states satisfy $E_0(u) >0$. We include the proofs for completeness. 

\begin{lem}\label{identities}
If $u \in H^2(\R^N)$ is a solution to (\ref{4nls}), then it satisfies $I_w(u)=P_w(u)=Q(u)=0$ where
$$I_w(u)=\gamma\|\Delta u\|_2^2+\mu\|\nabla u\|_2^2+\omega \|u\|_2^2  -\|u\|_{2\sigma +2}^{2\sigma+2}. $$
$$
P_w(u):=\frac{(N-4)\gamma}{2}\|\Delta u\|_2^2+ \frac{(N-2)\mu}{2}\|\nabla u\|_2^2+ \frac{N \omega}{2} \|u\|_2^2  - \frac{N}{2 \sigma +2}\|u\|_{2\sigma +2}^{2\sigma+2},
$$
and
$$Q(u)=\gamma\|\Delta u\|_2^2+\frac{\mu}{2}\|\nabla u\|_2^2-\frac{\sigma N}{2(2\sigma+2)}\|u\|_{2\sigma +2}^{2\sigma+2}.$$
\end{lem}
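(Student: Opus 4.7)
\medskip

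My plan is to establish the three identities in order, deriving the third as an algebraic consequence of the first two.

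For $I_\omega(u)=0$, I would test the equation $\gamma\Delta^2 u-\mu\Delta u+\omega u=|u|^{2\sigma}u$ against $\bar u$ and integrate. Since $u\in H^2(\R^N)$, the boundary terms in the integration by parts vanish, giving $\int \Delta^2 u\,\bar u=\|\Delta u\|_2^2$ and $\int(-\Delta u)\bar u=\|\nabla u\|_2^2$. Taking real parts yields $I_\omega(u)=0$ directly.

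For the Pohozaev-type identity $P_\omega(u)=0$, I would multiply the equation by the radial derivative $x\cdot\nabla \bar u$ and integrate on $\R^N$, then take real parts. The standard computations give
\[
\mathrm{Re}\int \Delta^2 u\,(x\cdot\nabla\bar u)\,dx=\frac{N-4}{2}\|\Delta u\|_2^2,\qquad
\mathrm{Re}\int(-\Delta u)(x\cdot\nabla\bar u)\,dx=\frac{N-2}{2}\|\nabla u\|_2^2,
\]
\[
\mathrm{Re}\int \omega u\,(x\cdot\nabla\bar u)\,dx=-\frac{N\omega}{2}\|u\|_2^2,\qquad
\mathrm{Re}\int |u|^{2\sigma}u\,(x\cdot\nabla\bar u)\,dx=-\frac{N}{2\sigma+2}\|u\|_{2\sigma+2}^{2\sigma+2}.
\]
Adding these gives $P_\omega(u)=0$. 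To justify the integration by parts one needs enough regularity and decay at infinity; I would invoke the standard bootstrap argument: since $0<N\sigma<4^*$, Sobolev embeddings applied iteratively to the equation show $u\in H^4(\R^N)$ and $u\in L^p\cap C^{1,\alpha}$ for a wide range of $p$, and one can either truncate with a cut-off $\chi(x/R)$ and let $R\to\infty$, or approximate by smooth compactly supported functions in the appropriate Sobolev norm. In either case the boundary terms at infinity vanish. The technical heart, and what I expect to be the main obstacle, is the careful handling of these boundary terms for a complex-valued $H^2$ solution of a biharmonic equation: one must verify, via this regularity/approximation step, that all the formal integrations by parts actually hold rigorously.

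Finally, $Q(u)=0$ follows by pure algebra. A direct computation of $\frac{N}{2}I_\omega(u)-P_\omega(u)$ gives:
\[
2\gamma\|\Delta u\|_2^2+\mu\|\nabla u\|_2^2-\frac{\sigma N}{2\sigma+2}\|u\|_{2\sigma+2}^{2\sigma+2}=2Q(u),
\]
so from $I_\omega(u)=P_\omega(u)=0$ one obtains $Q(u)=0$. Conceptually, this identity corresponds to testing the action along the $L^2$-preserving scaling $u_\lambda(x)=\lambda^{N/2}u(\lambda x)$ at $\lambda=1$, which is why it will play the key role in the virial-type arguments later in the paper.
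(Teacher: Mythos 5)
Your strategy is the same as the paper's: test the equation with $\bar u$ to get $I_\omega(u)=0$, prove the Derrick--Pohozaev identity with the multiplier $x\cdot\nabla\bar u$, and recover $Q(u)=0$ from the algebraic relation (your $\tfrac N2 I_\omega-P_\omega=2Q$ is equivalent to the paper's $Q=\tfrac N4 I_\omega-\tfrac12 P_\omega$). However, two of your four displayed Pohozaev building blocks carry the wrong sign. The correct values are
\[
\mathrm{Re}\int \Delta^2 u\,(x\cdot\nabla\bar u)\,dx=\frac{4-N}{2}\|\Delta u\|_2^2,\qquad
\mathrm{Re}\int(-\Delta u)(x\cdot\nabla\bar u)\,dx=\frac{2-N}{2}\|\nabla u\|_2^2,
\]
because $\Delta(x\cdot\nabla\bar u)=2\Delta\bar u+x\cdot\nabla\Delta\bar u$ and $\mathrm{Re}\int\Delta u\,(x\cdot\nabla\Delta\bar u)\,dx=\tfrac12\int x\cdot\nabla|\Delta u|^2\,dx=-\tfrac N2\|\Delta u\|_2^2$, and similarly for the second-order term. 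With the values as you wrote them ($\tfrac{N-4}{2}$ and $\tfrac{N-2}{2}$), the four identities are mutually inconsistent: summing them along the equation yields $\tfrac{(N-4)\gamma}{2}\|\Delta u\|_2^2+\tfrac{(N-2)\mu}{2}\|\nabla u\|_2^2-\tfrac{N\omega}{2}\|u\|_2^2+\tfrac{N}{2\sigma+2}\|u\|_{2\sigma+2}^{2\sigma+2}=0$, which is not $P_\omega(u)=0$. Once the two signs are corrected the sum gives $-P_\omega(u)=0$, so this is a reparable computational slip rather than a wrong approach, but as written the displayed identities are false.

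The second issue is the step you yourself call the technical heart, which is only asserted: ``in either case the boundary terms at infinity vanish.'' Membership $u\in H^4(\R^N)$ gives no pointwise decay rate, so if you integrate over $B_R(0)$ it is not true in general that the surface terms (which carry an explicit factor $R$) tend to $0$ for \emph{every} $R\to\infty$; and approximating $u$ by compactly supported functions is delicate because the identity involves the unbounded weight $x$. The paper's mechanism, following Berestycki--Lions, is precisely designed to fix this: all boundary contributions are collected into $I_R(u)$, one observes that $\int_0^\infty\bigl(\int_{\partial B_R(0)}|\nabla(\Delta u)|^2+|\Delta u|^2+\sum_{i,j}|u_{ij}|^2+|\nabla u|^2+|u|^2+|u|^{2\sigma+2}\,dS\bigr)dR<\infty$ by $u\in H^4$, and then a sequence $R_n\to\infty$ is extracted along which $R_n\int_{\partial B_{R_n}(0)}(\cdots)\,dS\to0$, hence $I_{R_n}(u)\to0$. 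A cut-off $\chi(x/R)$ can also work, but then each commutator term (of size $|x|\,|D^k\chi_R|=O(R^{1-k})$ times products of derivatives of $u$ lying in $L^1(\R^N)$) must be written out and estimated on the annulus. Either way, this verification is the actual content of the lemma's proof and needs to be carried out, not just invoked.
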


\begin{proof}
Since $u \in H^2(\R^N)$ is a solution to \eqref{4nls}, multiplying (\ref{4nls}) by $u$ and integrating we get that $I_{\omega}(u)=0$. Next, we notice that $Q(u)= \frac{N}{4}I_{\omega}(u) - \frac{1}{2}P_{\omega}(u)$. Therefore to prove that $Q(u)=0$, we only need to show that $P_{\omega}(u)=0$. This last identity is usually referred to as a Derrick-Pohozaev identity. To establish it we closely follow the proof of \cite[Proposition 1]{BeLi1}. First multiplying \eqref{4nls} by
$x \cdot \nabla u$ and integrating on $B_R(0)$ for some $R >1$, we have 
\begin{align} \label{inte}
 \int_{B_R(0)}  \gamma (x \cdot \nabla u)  \Delta^2 u- \mu  (x \cdot \nabla u) \Delta u
+ \omega(x \cdot \nabla u)  u \, dx =  \int_{B_R(0)} (x \cdot \nabla u) |u|^{2 \sigma} u \, dx.
\end{align}
In a first time, we focus on the first left-hand side term of \eqref{inte}. Integrating by parts, we find
\begin{align*}
\gamma \int_{B_R(0)}   (x \cdot \nabla u)  \Delta^2 u \, dx  &=  - \gamma \int_{B_R(0)}  \nabla(x \cdot \nabla u) \cdot \nabla (\Delta u) \, dx \\
&\ \ +  \gamma \int_{\partial B_R(0)} (\nabla (\Delta u) \cdot {\bf {n}})(x\cdot \nabla u) \, dS \\
&=\gamma \int_{B_R(0)} \Delta(x \cdot \nabla u) \Delta u \, dx  \\
&\ \ - \gamma \int_{\partial B_R(0)} (\nabla (x \cdot \nabla u) \cdot {\bf {n}}) \Delta u  -(\nabla (\Delta u) \cdot {\bf {n}})(x\cdot \nabla u) \, dS,
\end{align*}
where ${\bf {n}}:={\bf {n}}_{x}=\frac{x}{R}$ denotes the unit outward normal at $ x \in \partial{B_R(0)}$.
Integrating by parts one more time, we have
\begin{align*}
\gamma \int_{B_R(0)} \Delta(x \cdot \nabla u)  \Delta u \, dx
&= 2 \gamma \int_{B_R(0)}|\Delta u|^2 \, dx
+  \gamma \int_{B_R(0)} (x \cdot \nabla (\Delta u)) \Delta u  \, dx \\
&=2 \gamma \int_{B_R(0)}|\Delta u|^2 \, dx
+ \frac {\gamma}{2}\int_{B_R(0)} x \cdot \nabla (|\Delta u|^2)  \, dx \\
&= \frac{(4- N) \gamma}{2} \int_{B_R(0)}|\Delta u|^2 \, dx + \frac {\gamma}{2}\int_{ \partial B_R(0)}(x \cdot {\bf {n}}) |\Delta u|^2 \, dS. 
\end{align*}
Combining the previous two equalities, we obtain that
\begin{align*}
\gamma \int_{B_R(0)}   (x \cdot \nabla u)  \Delta^2 u \,  & dx = \frac{(4- N) \gamma}{2} \int_{B_R(0)}|\Delta u|^2 \, dx + \frac {\gamma}{2}\int_{ \partial B_R(0)}(x \cdot {\bf {n}}) |\Delta u|^2 \, dS\\
& - \gamma \int_{\partial B_R(0)} (\nabla (x \cdot \nabla u) \cdot {\bf {n}}) \Delta u  -(\nabla (\Delta u) \cdot {\bf {n}})(x\cdot \nabla u) \, dS.
\end{align*}
Next, we deal with the second left-hand side term of \eqref{inte}. We have that
\begin{align*}
- \mu \int_{B_R(0)}  (x \cdot \nabla u)&  \Delta u \,    dx = \mu \int_{B_R(0)} \nabla (x \cdot \nabla u) \cdot \nabla u \, dx
-  \mu \int_{\partial B_R(0)} (\nabla u \cdot {\bf {n}})(x \cdot \nabla u) \, dS \\
&= \frac{(2-N)\mu}{2} \int_{B_R(0)} |\nabla u|^2 \, dx + \frac{\mu}{2} \int_{\partial B_R(0)}(x \cdot {\bf {n}}) |\nabla u|^2 \, dS\\
&\ \ \ \ \ \ \ \ \ \ -  \mu \int_{\partial B_R(0)} ( \nabla u \cdot {\bf {n}})(x \cdot  \nabla u) \, dS.
\end{align*}
Finally, for the last two terms of \eqref{inte}, we get
\begin{align*}
 \omega \int_{B_R(0)} (x \cdot \nabla u)u \, dx
&= -\frac{ \omega N}{2} \int_{B_R(0)} |u|^2 \,dx
+  \frac{ \omega}{2} \int_{\partial B_R(0)} (x \cdot {\bf {n}}) |u|^2 \, dS, 
\end{align*}
and
\begin{align*}
\int_{B_R(0)} (x \cdot \nabla u)|u|^{2 \sigma } u \, dx
&=-\frac{N }{2 \sigma +2} \int_{B_R(0)}|u|^{2 \sigma +2} \, dx + \frac{1}{2 \sigma +2} \int_{ \partial B_R(0)}(x \cdot {\bf {n}}) |u|^{2 \sigma +2} \, dS.
\end{align*}
Taking into account the above calculations, it follows from \eqref{inte} that
\begin{align} \label{final}
\begin{split}
\frac{(N-4)\gamma}{2} \int_{B_R(0)}|\Delta u|^2 \, dx &+ \frac{(N-2)\mu}{2}  \int_{B_R(0)}|\nabla u|^2 \, dx
+ \frac{N \omega}{2} \int_{B_R(0)}|u|^2 \, dx \\
&   = \frac{N }{2 \sigma + 2} \int_{B_R(0)}|u|^{2 \sigma +2} \, dx +I_R(u),
\end{split}
\end{align}
where
\begin{align*}
&I_R (u)= \frac{  R}{2}\int_{ \partial B_R(0)} \left( \gamma |\Delta u|^2 +\mu |\nabla u|^2 +\omega |u|^2 - \dfrac{|u|^{2\sigma +2}}{\sigma +1}    \right) \, dS \\
&\ \ \ +\frac{1}{R} \int_{\partial B_R(0)} \left( \gamma (\nabla (\Delta u) \cdot x)(x\cdot \nabla u)-\gamma (\nabla (x \cdot \nabla u) \cdot x) \Delta u  - \mu |x \cdot  \nabla u|^2 \right) \, dS. 
\end{align*}
We now show that $ I_{R_n}(u) \to 0$ for a suitable sequence $(R_n)_n \subset \R$ with $R_n \to \infty$ as $n \to \infty$. First, using the Cauchy-Schwarz's inequality, we have, for any $x \in \partial{B_R(0)}$,
\begin{align} \label{bdy}
\begin{split}
\left|(\nabla (\Delta u) \cdot x)(x \cdot u)\right| &\leq R^2 \left(|\nabla (\Delta u)|^2 + | u|^2 \right) \\
|(\nabla (x \cdot \nabla u) \cdot x) \Delta u| & \leq  C_N R^2\left( |\Delta u|^2 + \sum_{i, j=1}^N |u_{i,j}|^2 + |\nabla u|^2\right), \\
\end{split}
\end{align}
where $u_{i, j}:= \frac{\partial^2 u}{\partial x_i \partial x_j}$. In view of the elliptic regularity theory, we have that $u \in H^4(\R^N)$, in particular $u \in H^3(\R^N)$. 
This yields to
\begin{align} \label{contra}
\begin{split}
&\int_{\R^N}|\nabla (\Delta u)|^2 + |\Delta u|^2 +  \sum_{i, j=1}^N |u_{i,j}|^2 + |\nabla u|^2 + |u|^2 + |u|^{2 \sigma +2} \, dx \\
&= \int_{0}^{\infty} \Big(\int_{\partial {B_R(0)}} |\nabla (\Delta u)|^2 + |\Delta u|^2 +  \sum_{i, j=1}^N |u_{i,j}|^2 + |\nabla u|^2 + |u|^2 + |u|^{2 \sigma +2} \, dS \Big) \,dR < \infty.
\end{split}
\end{align}
As a consequence, there exists a sequence $(R_n)_n \subset \R^N$ satisfying $R_n \to \infty$ as $n \to \infty$ so that
$$
R_n\int_{\partial {B_{R_n}(0)}} |\nabla (\Delta u)|^2 + |\Delta u|^2 +  \sum_{i, j=1}^N |u_{i,j}|^2 + |\nabla u|^2 + |u|^2 + |u|^{2 \sigma +2} \, dS \to 0.
$$
This implies that $I_{R_n}(u) \to 0$ as $n \to \infty$. Now substituting $R$ by $R_n$ in \eqref{final}, we then obtain that $P_{\omega}(u)=0$. This completes the proof.
\end{proof}

\begin{remark} \label{positive}
Let $4 \leq \sigma N <4^*$. From Lemma \ref{identities}, any ground state solution $u \in H^2(\R^N)$ to (\ref{4nls}) satisfies $Q(u)=0$. Thus
$$
E_0(u) = E_0(u) - \frac{2}{\sigma N}Q(u) =  \frac{(\sigma N -4)\gamma}{2 \sigma N} ||\Delta u||_2^2 + \frac{(\sigma N -2) \mu}{2 \sigma N} ||\nabla u||_2^2
$$
and this shows that $E_0(u) > 0$ (unless in the particular case $\sigma N =4$ and $\mu =0$ where $E_0(u)=0$).
\end{remark}

\subsection{Variational characterization of the ground state energy level}

Here we derive, in Proposition \ref{propmin}, an alternative characterization of $d_{\omega}$ that is central in our proof of Theorem \ref{mainthm1}. \medskip 

For any given $u \in H^2(\R^N)$, let $u_{\lambda}(x)=\lambda^{\frac{N}{4}}u(\sqrt{\lambda} x)$ for $\lambda>0$.

\begin{lem}\label{unique}
Let $u\in H^2(\R^N)$ be such that $Q(u)\leq 0$. Then there exists a unique $\lambda_0\in (0,1]$ such that the followings hold:
\begin{enumerate}
\item $Q(u_{\lambda_0})=0$; \smallskip
\item $\lambda_0=1$ if and only if $Q(u)=0$; \smallskip
\item  $E_{\omega}(u_\lambda)< E_{\omega}(u_{\lambda_0})$, for any $\lambda>0$, $\lambda \neq \lambda_0$; \smallskip
\item $ \lambda \mapsto E_{\omega}(u_{\lambda})$ is concave on $[\lambda_0, \infty)$.
\end{enumerate}
\end{lem}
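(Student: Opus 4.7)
The plan is to reduce everything to the behaviour of explicit scalar functions of $\lambda$. A direct change of variables yields
$$\|u_\lambda\|_2^2 = \|u\|_2^2, \quad \|\nabla u_\lambda\|_2^2 = \lambda\|\nabla u\|_2^2, \quad \|\Delta u_\lambda\|_2^2 = \lambda^2\|\Delta u\|_2^2, \quad \|u_\lambda\|_{2\sigma+2}^{2\sigma+2} = \lambda^{N\sigma/2}\|u\|_{2\sigma+2}^{2\sigma+2},$$
so $E_\omega(u_\lambda)$ and $Q(u_\lambda)$ become explicit combinations of $\lambda$, $\lambda^2$, and $\lambda^{N\sigma/2}$ with nonnegative coefficients built from the norms of $u$. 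A routine differentiation then gives the identity
$$\frac{d}{d\lambda}E_\omega(u_\lambda) = \frac{Q(u_\lambda)}{\lambda},$$
which identifies critical points of the scalar energy with zeros of $\lambda \mapsto Q(u_\lambda)$, and matches the sign of the derivative with the sign of $Q(u_\lambda)$.

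For parts (1)--(3) I would study $g(\lambda) := Q(u_\lambda)/\lambda = a\lambda + b - c\lambda^{N\sigma/2-1}$ where $a,b,c \geq 0$ are determined by the norms of $u$ (with $a,c>0$ for $u\neq 0$, and $b=0$ iff $\mu=0$). In the regime $\sigma N\geq 4$ relevant to the applications of this lemma, an elementary shape analysis shows that $g$ starts at $b\geq 0$, has at most one interior critical point, and tends to $-\infty$ as $\lambda \to \infty$; it therefore changes sign exactly once, producing a unique $\lambda_0>0$ with $Q(u_{\lambda_0})=0$. The assumption $g(1)=Q(u)\leq 0$ then forces $\lambda_0\in(0,1]$, with equality iff $Q(u)=0$, giving (1) and (2). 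Since $g>0$ on $(0,\lambda_0)$ and $g<0$ on $(\lambda_0,\infty)$, the map $\lambda\mapsto E_\omega(u_\lambda)$ is strictly increasing then strictly decreasing, whence the strict global maximum at $\lambda_0$, yielding (3).

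Part (4) I would obtain by differentiating once more,
$$\frac{d^2}{d\lambda^2}E_\omega(u_\lambda) = \gamma\|\Delta u\|_2^2 - \frac{(N\sigma/2)(N\sigma/2-1)}{2\sigma+2}\lambda^{N\sigma/2-2}\|u\|_{2\sigma+2}^{2\sigma+2},$$
substituting $\|u\|_{2\sigma+2}^{2\sigma+2}$ by its expression extracted from $Q(u_{\lambda_0})=0$, and simplifying: in the range $\sigma N\geq 4$ the resulting value at $\lambda=\lambda_0$ is nonpositive (the coefficient of $\gamma\|\Delta u\|_2^2$ becomes $2-N\sigma/2\leq 0$, while the $\|\nabla u\|_2^2$ term contributes with the correct sign). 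Since the monomial $\lambda^{N\sigma/2-2}$ multiplying the negative term is non-decreasing in $\lambda$ precisely when $\sigma N\geq 4$, the second derivative stays $\leq 0$ for all $\lambda \geq \lambda_0$, which is the required concavity.

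The main obstacle I expect is the uniqueness statement in (1): one has to unpack the shape of $g$ cleanly across the subcases $\mu>0$ versus $\mu=0$ and $\sigma N=4$ versus $\sigma N>4$. In each subcase the analysis reduces to the monotonicity of a single scalar function, but the bookkeeping (making sure that $g$ crosses zero at most once and really does cross, given $Q(u)\leq 0$) is where care is required; everything else follows almost mechanically from the scaling computation and the identity $\lambda \, E_\omega'(u_\lambda) = Q(u_\lambda)$.
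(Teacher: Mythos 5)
Your argument is essentially the paper's own proof: the same scaling identities, the key relation $\frac{d}{d\lambda}E_\omega(u_\lambda)=\frac{1}{\lambda}Q(u_\lambda)$ used to deduce (1)--(3) from the sign change of $\lambda\mapsto Q(u_\lambda)$, and the same substitution of $Q(u_{\lambda_0})=0$ into the second derivative to obtain concavity on $[\lambda_0,\infty)$ for (4). The only quibbles, no worse than in the paper itself, are that the constant in your parenthetical for (4) comes out as $3-\sigma N$ rather than $2-\sigma N/2$ (still nonpositive for $\sigma N\ge 4$, so the conclusion stands), and that the degenerate case $\mu=0$, $\sigma N=4$, where $Q(u_\lambda)=\lambda^{2}Q(u)$ and uniqueness genuinely fails, must be set aside exactly as it is implicitly in the paper's proof.
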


\begin{proof}
Making a change of variables, we have
\begin{equation*}
E_{\omega}(u_{\lambda})=\frac{\gamma\lambda^{2}}{2}\|\Delta u\|_2^2+\frac{\lambda}{2}\mu\|\nabla u\|_2^2+\dfrac{\omega}{2} \|u\|_2^2 -\frac{\lambda^{\sigma N/2}}{2\sigma+2}\|u\|_{2\sigma+2}^{2\sigma+2},
\end{equation*}
and differentiating with respect to $\lambda$, we find
\begin{align*}
\dfrac{d}{d\lambda}E_{\omega}(u_{\lambda})&=\gamma\lambda\|\Delta u\|_2^2+\frac{\mu}{2}\|\nabla u\|_2^2-\frac{\sigma N\lambda^{\sigma N/2 -1}}{2(2\sigma+2)}\|u\|_{2\sigma +2}^{2\sigma+2}\\
&= \dfrac{1}{\lambda}Q(u_\lambda)
\end{align*}
and
\begin{align} \label{second}
\dfrac{d^2}{d^2\lambda}E_{\omega}(u_{\lambda})=\gamma \|\Delta u\|_2^2 - \frac{\sigma N (\sigma N -2)}{4 (2 \sigma +2)} t^{\frac{\sigma N}{2}-2} \lambda_0^{\frac{\sigma N}{2}-2} \|u\|_{2\sigma+2}^{2\sigma+2}.
\end{align}
Since $\sigma N \geq 4$, and $Q(u) \leq 0$, it is easily seen that there exists a unique $\lambda_0 \in (0,1]$ such that
$$
Q(u_{\lambda_0}) = \lambda_0 \frac{d}{d \lambda}E_{\omega}(u_{\lambda})|_{\lambda = \lambda_0} =0,
$$
and also that
$$
\dfrac{d}{d\lambda}E_{\omega}(u_\lambda )=\begin{cases}>0,\ \mbox{ if } \lambda \in (0,\lambda_0),\\ <0,\ \mbox{ if } \lambda\in (\lambda_0 , \infty ),\end{cases}
$$
from which we get that $E_{\omega}(u_\lambda)< E_{\omega}(u_{\lambda_0})$ for any $\lambda>0$, $\lambda \neq \lambda_0$. Thus $(1)$-$(3)$ hold. Now writing $\lambda = t \lambda_0$ for $t\geq 1$, in view of \eqref{second}, we have
\begin{align*}
\dfrac{d^2}{d^2\lambda}E_{\omega}(u_{\lambda})= \dfrac{1}{\lambda_0^2}\left(\gamma \lambda_0^2 \|\Delta u\|^2_2 - \frac{\sigma N (\sigma N -2)}{4 (2 \sigma +2)} t^{\frac{\sigma N}{2}-2} \lambda_0^{\frac{\sigma N}{2}} \|u\|_{2\sigma +2}^{2\sigma+2}\right).
\end{align*}
Because of $\sigma N \geq 4$, and $t \geq 1$, using the fact that $Q(u_{\lambda_0})=0$, that is
$$
\gamma \lambda_0^2 \|\Delta u\|_2^2 + \frac{\mu}{2} \lambda_0 \|\nabla u\|_2^2 - \frac{\sigma N }{2 (2 \sigma +2)} \lambda^{\frac{\sigma N}{2}} \|u\|_{2\sigma +2}^{2\sigma+2}=0,
$$
then there holds
$$
\frac{d^2}{d^2 \lambda} E_{\omega}(u_{\lambda}) <0 \ \text{for}\ \lambda \geq \lambda_0,
$$
and this ends the proof.
\end{proof}

It is standard to show, if \eqref{4nls} admits a ground state solution, that
\begin{align} \label{ground}
d_{\omega} = \inf \{E_{\omega}(u) \ | \ u\in H^2(\R^N) \backslash \{0\},\ I_{\omega}(u)=0  \},
\end{align}
and that the infimum in (\ref{ground}) is achieved. Using this property we now obtain the following result.

\begin{prop}\label{propmin}
Let
$$
c_\omega := \inf \{E (u) \ | \ u\in \mathcal{M}_{\omega}\}
$$
where $\mathcal{M}_{\omega}:= \{u\in H^2 (\R^N)\backslash \{0\} \ | \ Q(u)=0,\ I_{\omega}(u)\leq 0 \}.$
Then $d_{\omega} \leq c_\omega$ and the equality holds if $d_{\omega}$ is achieved.
\end{prop}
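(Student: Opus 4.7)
The plan is to combine the scaling properties of $\lambda \mapsto u_\lambda$ recorded in Lemma \ref{unique} with the standard alternative characterization
\begin{align*}
d_\omega = \inf \{ E_\omega(v) \mid v \in H^2(\R^N) \setminus \{0\}, \ I_\omega(v) = 0\},
\end{align*}
stated just before Proposition \ref{propmin}. The role of $\mathcal{M}_\omega$, with its constraints $Q=0$ and $I_\omega \leq 0$, is to give an enlarged constraint set within which one can still produce, for each element, a scaled competitor on the level set $\{I_\omega = 0\}$ with lower or equal energy.

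For the inequality $d_\omega \leq c_\omega$, I would fix an arbitrary $u \in \mathcal{M}_\omega$ and read off from the change-of-variables computation used in Lemma \ref{unique} that
\begin{align*}
I_\omega(u_\lambda) = \gamma \lambda^2 \|\Delta u\|_2^2 + \mu \lambda \|\nabla u\|_2^2 + \omega \|u\|_2^2 - \lambda^{\sigma N/2} \|u\|_{2\sigma+2}^{2\sigma+2}.
\end{align*}
Since $\omega>0$ and $u \neq 0$, the right-hand side tends to $\omega\|u\|_2^2 > 0$ as $\lambda \to 0^+$, while at $\lambda=1$ it equals $I_\omega(u) \leq 0$ by the definition of $\mathcal{M}_\omega$. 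The intermediate value theorem then produces some $\lambda^* \in (0,1]$ with $I_\omega(u_{\lambda^*}) = 0$, and $u_{\lambda^*} \neq 0$ is automatic. Because $Q(u)=0$, Lemma \ref{unique} applies with $\lambda_0 = 1$, so $\lambda = 1$ is the unique global maximizer of $\lambda \mapsto E_\omega(u_\lambda)$ and in particular $E_\omega(u_{\lambda^*}) \leq E_\omega(u)$. Plugging $u_{\lambda^*}$ into the characterization above yields $d_\omega \leq E_\omega(u_{\lambda^*}) \leq E_\omega(u)$; taking the infimum over $u \in \mathcal{M}_\omega$ gives $d_\omega \leq c_\omega$.

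For the equality clause, if $d_\omega$ is attained, then I pick a ground state $u^*$ realizing it. Lemma \ref{identities} ensures $I_\omega(u^*) = Q(u^*) = 0$, so $u^* \in \mathcal{M}_\omega$ and $c_\omega \leq E_\omega(u^*) = d_\omega$; together with the previous inequality this forces $c_\omega = d_\omega$.

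The only step that needs genuine care is the existence of $\lambda^* \in (0,1]$ with $I_\omega(u_{\lambda^*}) = 0$, which is precisely where the hypotheses are used: the assumption $\omega > 0$ is what makes $I_\omega(u_\lambda) \to \omega\|u\|_2^2 > 0$ as $\lambda \to 0^+$, and the extra inequality $I_\omega(u) \leq 0$ built into $\mathcal{M}_\omega$ is what provides the matching sign at $\lambda = 1$. Everything else is bookkeeping on scaling identities already recorded in Lemma \ref{unique} and on the Nehari-type reformulation of $d_\omega$.
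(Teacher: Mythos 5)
Your proposal is correct and follows essentially the same route as the paper: scale $u$ down via the intermediate value theorem to reach the Nehari set $\{I_\omega=0\}$, use Lemma \ref{unique} with $\lambda_0=1$ (from $Q(u)=0$) to compare energies, invoke the characterization \eqref{ground} of $d_\omega$, and obtain equality from Lemma \ref{identities} when $d_\omega$ is attained. The only cosmetic difference is that you treat $I_\omega(u)=0$ and $I_\omega(u)<0$ uniformly by allowing $\lambda^*\in(0,1]$, whereas the paper dispatches the case $I_\omega(u)=0$ separately.
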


\begin{proof}
Let $u\in \mathcal{M}$. We can assume without loss of generality that $I_{\omega} (u)<0$. Otherwise, if $I_{\omega}(u)=0$, we  have $E_{\omega}(u)\geq d_{\omega}$. Because $\sigma N \geq 4$, we observe that
$\lim_{\lambda \rightarrow 0^+}I_{\omega}( u_\lambda)>0$. Therefore, by continuity, there exists a $\hat\lambda \in (0,1)$ such that $I_{\omega}(u_{\hat \lambda} )=0$, and $E_{\omega}(u_{\hat\lambda})\geq d_{\omega}$. Using that $Q(u)=0$, we deduce from Lemma \ref{unique} that $E_{\omega}(u) \geq E(u_{\hat \lambda} )\geq d_{\omega}$. Thus we obtain  $c_{\omega} \geq d_{\omega}$. Next we assume that $d_{\omega}$ is achieved by some $v \in H^2(\R^N)$. Since $v$ is a solution to \eqref{4nls} it follows from Lemma \ref{identities} that $I_{\omega}(v)=0$ and $Q(v)=0$. Hence  $ v \in \mathcal{M}_{\omega}$ and  $c_\omega \leq E_{\omega}(v)=d_{\omega}$.
\end{proof}

\subsection{The blow-up alternative and a localized virial identity}

We begin by recalling the local well-posedness to the Cauchy problem \eqref{4nlsdis} and a blow-up alternative due to \cite[Proposition 4.1]{Pa}.

\begin{lem} \label{exloc}
Let $\sigma N < 4^*$. For any $u_0\in H^2 (\R^N)$, there exist $T>0$ and a unique solution $u(t) \in C([0,T); H^2 (\R^N))$ to \eqref{4nlsdis} with the initial datum $u_0$ so that the mass and the energy are conserved along the time, that is for any $t \in [0, T)$
$$
\|u(t)\|_{2}=\|u_0\|_{2}, \ E_{\omega}(u(t))=E_{\omega}(u_0).
$$
Moreover, we have the alternative that either $T=\infty,$ or $\lim_{t\rightarrow T^-}\|\Delta u(t) \|_2=\infty.$
\end{lem}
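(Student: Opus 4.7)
The plan is to run the standard Duhamel--Strichartz contraction argument. Writing the biharmonic Schr\"odinger group as $S(t) := e^{-it(\gamma \Delta^2 - \mu \Delta)}$, the equation \eqref{4nlsdis} is recast in its integral form
$$
u(t) = S(t) u_0 + i \int_0^t S(t-s)\,|u(s)|^{2\sigma} u(s) \, ds,
$$
and a local solution in $H^2$ is produced as a fixed point of the right-hand side. The required linear Strichartz estimates for $S(t)$ were established in \cite{BAKoSa}; using them I would work in $X_T := C([0,T];H^2(\R^N)) \cap L^q([0,T]; W^{2,r}(\R^N))$ for an admissible pair $(q,r)$ adapted to the fourth order group. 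The nonlinearity $|u|^{2\sigma} u$ is then controlled in $H^2$ via a fractional chain/Leibniz rule together with the Sobolev embedding $H^2(\R^N) \hookrightarrow L^p(\R^N)$ valid in the range dictated by $\sigma N < 4^*$; this is precisely the $H^2$-subcritical condition that closes the contraction on a small interval $[0,T]$ with $T = T(\|u_0\|_{H^2})$.

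Mass and energy conservation are obtained by a standard regularization. For $u_0\in H^4(\R^N)$ the solution is classical, and multiplying \eqref{4nlsdis} by $\bar u$, respectively $\partial_t \bar u$, and integrating the imaginary, respectively real, part over $\R^N$ gives $\frac{d}{dt}\|u(t)\|_2^2 = 0$ and $\frac{d}{dt}\bigl(E_\omega(u(t)) - \tfrac{\omega}{2}\|u(t)\|_2^2\bigr)=0$; combined, these yield conservation of both $\|u(t)\|_2$ and $E_\omega(u(t))$. For general $u_0 \in H^2(\R^N)$, one approximates by $u_{0,n} \in H^4(\R^N)$, applies the contraction estimate to get uniform-in-$n$ local existence on some common interval, and passes to the limit using continuous dependence in $H^2$.

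For the blow-up alternative, let $T^* \in (0,\infty]$ be the maximal time of existence. Since the contraction time depends only on $\|u_0\|_{H^2}$, if $T^* < \infty$ and $\liminf_{t\to T^{*-}} \|u(t)\|_{H^2} < \infty$, then applying the local theory at a time $t$ sufficiently close to $T^*$ with datum $u(t)$ would extend the solution beyond $T^*$, a contradiction. Hence $\|u(t)\|_{H^2} \to \infty$ as $t \to T^{*-}$. Conservation of mass gives $\|u(t)\|_2 = \|u_0\|_2$, and Gagliardo--Nirenberg yields $\|\nabla u(t)\|_2 \le C \|u(t)\|_2^{1/2}\|\Delta u(t)\|_2^{1/2}$, so the blow-up of $\|u(t)\|_{H^2}$ is equivalent to the blow-up of $\|\Delta u(t)\|_2$, which is the stated dichotomy. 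The one nontrivial point is the nonlinear $H^2$ estimate over the full range $\sigma N < 4^*$, which requires the fractional chain rule and a careful choice of Strichartz exponents; since this is exactly \cite[Proposition 4.1]{Pa}, I would simply invoke that statement.
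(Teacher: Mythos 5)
Your proposal is correct and matches the paper's treatment: the paper does not prove this lemma but simply recalls it as \cite[Proposition 4.1]{Pa}, which is exactly the statement you invoke for the nontrivial nonlinear estimate. Your surrounding sketch (Duhamel with the Strichartz bounds of \cite{BAKoSa}, conservation laws by regularization, blow-up alternative from the $H^2$-subcritical local time combined with mass conservation and interpolation to reduce to $\|\Delta u(t)\|_2$) is the standard argument behind that cited result.
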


Next we recall the localized virial to \eqref{4nlsdis} which has been introduced by Boulenger and Lenzmann \cite{BoLe}. This quantity will play a crucial role to deduce the occurrence of  blow-up. Let $\varphi : \R^N \rightarrow \R$ be a radial function such that $D^j \varphi \in L^\infty (\R^N)$, $1\leq j \leq 6$,
$$
\varphi (r):=\begin{cases}\dfrac{r^2}{2} & for\ r\leq 1 \\ const. & for\ r\geq 10  \end{cases}, and\ \varphi'' (r)\leq 1,\ for\ r\geq 0.
$$
Let $R>0$, we set $\varphi_R (r):= R^2 \varphi (\dfrac{r}{R})$. For $u\in H^2 (\R^N)$, we defined the localized virial $M_{\varphi_R}$ by
\begin{equation}\label{localvirial}
M_{\varphi_R} [u]:= 2 \text{Im} \int_{\R^N}\bar{u} \nabla \varphi_R \nabla u\ dx.
\end{equation}
The following lemma reveals a key information on the evolution of this quantity.
\begin{lem} \label{locvir} \cite[Lemma 3.1]{BoLe}
Let $\sigma N < 4^*, N\geq 2$, and $R>0$. Suppose that $u(t) \in C([0,T); H^2_{rad} (\R^N))$ is a solution to \eqref{4nlsdis}. Then for any $t\in [0,T)$,
\begin{align}\label{blevt} \nonumber
\dfrac{d}{dt}M_{\varphi_R} [ u (t) ] &\leq 4 N\sigma E_0(u(t)) - (2N\sigma -8)\gamma \|\Delta u(t)\|_2^2 - (2N\sigma -4 )\mu \|\nabla u (t)\|_2^2 \\
&  +O\left(\frac{1}{R^4}+ \dfrac{\|\nabla u (t)\|_2^2}{R^2}+ \dfrac{\|\nabla u(t)\|_2^\sigma}{R^{\sigma (N-1)}}+\dfrac{\mu}{R^2} \right)\\ \nonumber
& =  8 Q(u(t)) + O\left(\frac{1}{R^4} + \dfrac{\|\nabla u(t)\|_2^2}{R^2}+ \dfrac{\|\nabla u(t)\|_2^\sigma}{R^{\sigma (N-1)}}+\dfrac{\mu}{R^2} \right).
\end{align}
Moreover, if $\mu=0$ and $\sigma N=4$, we have (see $(7.12)$ and $(7.13)$ of \cite{BoLe})
\begin{equation}
\label{virlimitcase}
\dfrac{d}{dt}M_{\varphi_R} [u(t)]\leq 8Q (u(t))+0(\dfrac{1}{\eta R^2} +\eta^{1/2}),
\end{equation}
for, $R\geq 1$ and $0<\eta <1$.
\end{lem}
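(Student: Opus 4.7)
The plan is to compute $\tfrac{d}{dt}M_{\varphi_R}[u(t)]$ directly from \eqref{4nlsdis} and to separate the contribution coming from the ball $\{|x|\le R\}$, where $\varphi_R(x)=|x|^2/2$, from that of the annulus $\{R<|x|\le 10R\}$, where $\varphi_R$ is merely a bounded smooth cutoff. On the ball the computation must reproduce the classical unlocalized virial, so the leading term will be $8Q(u(t))$; everything outside will be controlled as an error depending on $R$, $\mu$ and the radial decay of $u$.

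Concretely, I would differentiate under the integral in \eqref{localvirial}, substitute $i\partial_t u=\gamma\Delta^2 u-\mu\Delta u-|u|^{2\sigma}u$, and integrate by parts repeatedly (legitimate since $u(t)\in H^2$ and all $D^j\varphi_R$ are bounded). One is led to an identity of the schematic form
\begin{equation*}
\tfrac{d}{dt}M_{\varphi_R}[u] = 4\gamma\!\int\!\varphi_R''|\Delta u|^2 + 2\mu\!\int\!\varphi_R''|\nabla u|^2 - \tfrac{2\sigma}{\sigma+1}\!\int\!(\Delta\varphi_R)|u|^{2\sigma+2} - \gamma\!\int\!(\Delta^3\varphi_R)|u|^2 + \mathcal{R},
\end{equation*}
where $\mathcal{R}$ collects lower-order cross-terms supported in the annulus. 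Since $\varphi_R''\equiv 1$ and $\Delta\varphi_R\equiv N$ on $\{|x|\le R\}$, the bulk contributions reconstruct exactly $4\gamma\|\Delta u\|_2^2+2\mu\|\nabla u\|_2^2-\tfrac{2\sigma N}{\sigma+1}\|u\|_{2\sigma+2}^{2\sigma+2}=8Q(u)$, which can equivalently be rewritten via $E_0(u)$ to produce the intermediate expression in the statement. The remaining annular integrals are then controlled by the pointwise estimates $|D^j\varphi_R|\lesssim R^{2-j}$ and mass conservation, which turn purely quadratic remainders into terms of size $O(R^{-4}+R^{-2}\|\nabla u\|_2^2+\mu R^{-2})$, while the nonlinear remainder $\int_{|x|\ge R}|\Delta\varphi_R-N\mathbf{1}_{\{|x|\le R\}}|\,|u|^{2\sigma+2}$ is handled via the radial Strauss inequality $|u(x)|\le C|x|^{-(N-1)/2}\|u\|_2^{1/2}\|\nabla u\|_2^{1/2}$, which yields precisely the error $R^{-\sigma(N-1)}\|\nabla u\|_2^\sigma$ appearing in the statement. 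The main obstacle is bookkeeping: repeated integration by parts generates many cross-terms involving products of derivatives of $\varphi_R$ with various derivatives of $u$, and one must check that each of them falls into one of these four error categories without uncontrolled residue.

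In the critical regime $\mu=0$, $\sigma N=4$, the bound $R^{-\sigma(N-1)}\|\nabla u\|_2^\sigma$ can no longer be absorbed, because $\|\nabla u\|_2$ is not scale-controlled by the biharmonic kinetic term at the critical power. Following \cite[\S 7]{BoLe}, I would therefore split the nonlinear annular error via a weighted Young-type inequality with small parameter $\eta\in(0,1)$: one piece is of size $\eta^{1/2}$, to be absorbed against the $Q(u)$ contribution, and the residual is of order $(\eta R^2)^{-1}$, yielding the refined bound \eqref{virlimitcase}.
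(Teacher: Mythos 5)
The paper does not actually prove this lemma: it is imported verbatim from Boulenger--Lenzmann (\cite[Lemma 3.1]{BoLe} for \eqref{blevt} and (7.12)--(7.13) of \cite{BoLe} for \eqref{virlimitcase}), so the only ``proof'' in the paper is the citation. Your sketch is an outline of that reference's argument, and several ingredients are the right ones: differentiating \eqref{localvirial} along \eqref{4nlsdis}, recovering the unlocalized virial $8Q(u)$ from the region where $\varphi_R(r)=r^2/2$, estimating the nonlinear exterior term by the radial Strauss bound to produce the $R^{-\sigma(N-1)}\|\nabla u\|_2^{\sigma}$ error, and the $\eta$-splitting in the critical case $\mu=0$, $\sigma N=4$.

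There is, however, a genuine gap in your error bookkeeping. You claim that all quadratic remainders supported in the annulus are of size $O(R^{-4}+R^{-2}\|\nabla u\|_2^2+\mu R^{-2})$ thanks to $|D^j\varphi_R|\lesssim R^{2-j}$ and mass conservation. This is true for the terms carrying at most one derivative of $u$ or high derivatives of $\varphi_R$, but it fails for the terms generated by the biharmonic operator that carry \emph{two} derivatives of $u$ in the annulus (terms of the type $\int(\varphi_R''-1)|\Delta u|^2$ and the Hessian cross terms $\int \partial_{jk}\varphi_R\,\partial_{jk}\bar u\,(\cdots)$): there the coefficients are $O(1)$, not $O(R^{-2})$, and $\|\Delta u\|_{L^2(R\le |x|\le 10R)}$ is in no way controlled by the conserved mass. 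In \cite{BoLe} these contributions are not treated as small errors at all; they are absorbed into the main dispersive term using $\varphi''\le 1$ and the sign structure of the localized Hessian terms, which is precisely why \eqref{blevt} is an \emph{inequality} with the coefficient $-(2N\sigma-8)\gamma\|\Delta u\|_2^2$ rather than an identity up to $o_R(1)$; your sketch, as written, does not close without this absorption step. Two further, more minor, points: your ``schematic'' bulk reconstruction $4\gamma\|\Delta u\|_2^2+2\mu\|\nabla u\|_2^2-\tfrac{2\sigma N}{\sigma+1}\|u\|_{2\sigma+2}^{2\sigma+2}$ is \emph{not} $8Q(u)$ (the correct bulk coefficients are $8\gamma$ and $4\mu$), and differentiating $M_{\varphi_R}$ and integrating by parts for a solution that is merely $C([0,T);H^2)$ requires a regularization/approximation argument (also supplied in \cite{BoLe}) that the sketch passes over. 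Given that the paper itself relies on the citation, the honest options are either to cite \cite{BoLe} as the paper does, or to reproduce their estimates including the absorption of the second-order annulus terms.
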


\section{Proof of Theorems \ref{mainthm1} and \ref{mainthm11}.}\label{sec:proof}

In this last section we give the proof of our main results.

\begin{proof}[Proof of Theorem \ref{mainthm1}]
We first consider the case $\mu\ne 0$. Let $\varepsilon>0$ be fixed, and $u \in H^2_{rad} (\R^N)$ be a ground state solution to \eqref{4nls}. Since $Q(u)=0$, thanks to Lemma \ref{unique}, for $\lambda >1$ sufficiently close to $1$, we have
\begin{equation}\label{thme1}
E_{\omega}(u_{\lambda} )< E_{\omega}(u), \ Q(u_{\lambda})<0.
\end{equation}
Also notice that $\|u_{\lambda} -u \|_{H^2}\leq \varepsilon$, and
\begin{align}\label{thme2}
\begin{split}
I_{\omega}(u_{\lambda} )&=2 E_{\omega}(u_{\lambda} ) +\dfrac{4}{N} \left(Q(u_{\lambda}) - \gamma \|\Delta u_{\lambda}\|_2^2 -\frac{\mu }{2}\|\nabla u_{\lambda} \|^2_2  \right) \\
&\leq 2 E_{\omega}(u) -I_{\omega}(u) +\dfrac{4}{N} \left( Q(u) -  \lambda^2 \gamma \|\Delta u\|_2^2-\frac{\lambda\mu}{2} \|\nabla u \|_2^2\right)\\
&=\dfrac{4}{N} \left((1-\lambda^2) \gamma \|\Delta u\|_2^2 +\frac{(1-\lambda)\mu}{2}\|\nabla u \|_2^2 \right)<0.
\end{split}
\end{align}
We set $v:=u_{\lambda}$. In view of Lemma \ref{exloc}, we know that there exists a unique solution $\phi(t) \in C([0, T); H^2_{rad}(\R^N))$ to \eqref{4nlsdis} with initial datum $\phi(0)=v$, where $T>0$ is the maximum existence time to $\phi(t)$. At this point, in view of Definition \ref{unstable},  to prove that $u$ is unstable, we must show that the solution $\phi(t)$ blows up in finite time. We divide the rest of the proof into four steps. \medskip \\
{\bf First step :} We claim that for any $t\in [0,T)$, there holds
\begin{equation} \label{thme4}
 E_{\omega}(\phi (t))< E_{\omega}(u), \ I_{\omega}(\phi (t))<0, \ \text{and} \ Q(\phi (t))< 0.
\end{equation}
By Lemma \ref{exloc}, and \eqref{thme1}, we get for any $t\in [0,T)$,
\begin{equation} \label{thme3}
E_{\omega}(\phi (t))=E_{\omega}(v)< E_{\omega}(u).
\end{equation}
We now show that $I_{\omega}(\phi(t))<0$ for any $t \in [0, T)$. We assume by contradiction  that there is $t_0 \in [0, T)$ so that $I_{\omega}(\phi(t_0)) =0$. Thus by \eqref{ground}, and \eqref{thme1}, we obtain that $d_{\omega} \leq E_{\omega}(\phi(t_0))=E(v) < E_{\omega}(u)$, which contradicts the fact that $E_{\omega}(u)=d_{\omega}$. From \eqref{thme2}, we have that $I_{\omega}(\phi(0)) <0$, and thus $I_{\omega}(\phi(t))<0$ for any $t \in [0, T)$. Finally we show that $Q (\phi(t))<0$ for any $t \in [0, T)$. We suppose that $Q(\phi(t_1))=0$ for some $t_1 \in [0, T)$. Since $I_{\omega}(\phi(t_1)) <0$, we obtain that $\phi(t_1) \in \mathcal{M}_{\omega}$. Thanks to Proposition \ref{propmin}, we then deduce that $E_{\omega}(\phi(t_1))\geq E_{\omega}(u)$, which contradicts \eqref{thme3}. Since $Q(\phi (0))<0$, then $Q(\phi (t))<0$ for any $t\in [0,T)$. This establishes \eqref{thme4}. \medskip \\
{\bf Second step :} We claim that there exists a constant $a >0$ (not depending on $t$) such that $Q(\phi(t)) \leq -a$ for any $t\in [0,T)$. Let $t \in [0, T)$ be arbitrary but fixed. In view of \eqref{thme4}, $Q(\phi(t))<0$ and hence by Lemma \ref{unique} there exists a unique $\lambda_0 \in (0,1)$ such that $Q(\phi(t)_{\lambda_0}) =0$. If $I_{\omega}(\phi(t)_{\lambda_0}) \leq 0$, we define $\lambda^*:=\lambda_0$, otherwise we take $\lambda^*>1$ such that $I_{\omega}(\phi(t)_{\lambda^*}) = 0$, and $Q(\phi(t)_{\lambda^*}) <0$. In any case we obtain that $E_{\omega}(\phi(t)_{\lambda^*}) \geq d_{\omega}$, and $Q(\phi(t)_{\lambda^*}) \leq 0$. This, together with that fact from Lemma \ref{unique} that $\lambda \mapsto E_{\omega}(u_{\lambda})$ is concave on $[\lambda_0, \infty)$, gives 
\begin{align*} \label{locvirial2}
\begin{split}
E_{\omega}(v) = E_{\omega}(\phi(t)) & \geq  E_{\omega}(\phi(t)_{\lambda^*}) + (1 - \lambda^*) \frac{d}{d \lambda} E_{\omega}(\phi(t)_{\lambda})|_{\lambda =1}\\
& \geq d_{\omega} + (1 - \lambda^*) Q(\phi(t)).
\end{split}
\end{align*}
This yields that $Q(\phi(t)) \leq - (d_{\omega}-E_{\omega}(v)):=-a$. Noting that $a>0$ because of \eqref{thme1} this proves the claim. \medskip \\
{\bf Third step :} Let us show that there exist a constant $\delta>0$ such that
\begin{equation}\label{thmsecondstep}
\dfrac{d}{dt}M_{\varphi_R} [\phi (t)]\leq - \delta \|\nabla \phi (t)\|_2^2 \ \text{for} \ t \in [0,T),
\end{equation}
and a $t_1 \geq 0$ such that
\begin{equation}\label{initial}
M_{\varphi_R} [\phi (t)]<0  \ \text{for} \ t \geq t_1.
\end{equation}
Observe that $\phi (t)$ is radial for any $t\in [0,T)$, since the initial datum $\phi(0)=v$ is radial. Therefore, to prove
\eqref{thmsecondstep}, we can apply Lemma \ref{locvir}. To this aim, we now distinguish two cases.  \medskip \\
{\it Case 1 :}  Assume that $t \in [0, T)$ is such that
\begin{equation}
\label{thmdect}
||\nabla \phi (t)||_2^2 \leq \frac{4N \sigma E_0(v)}{\mu (N \sigma -2)}.
\end{equation}
Since $N \geq 2$, taking $R>0$ large enough,  we can insure that
$$
O\left(\frac{1}{R^4}+ \dfrac{\|\nabla \phi (t)\|_2^2}{R^2}+ \dfrac{\|\nabla \phi (t)\|_2^\sigma}{R^{\sigma (N-1)}}+\dfrac{\mu}{R^2} \right)< a,
$$
where $a>0$ is the constant determined in the Second step.
Recalling that $Q(\phi(t)) \leq - a$ for any $t \in [0, T)$, we deduce from Lemma \ref{locvir} that
\begin{equation}\label{esti1}
\dfrac{d}{dt}M_{\varphi_R} [ \phi (t) ] \leq - 7 a \leq  - \delta ||\nabla \phi (t)||_2^2
\end{equation}
for some $\delta >0$ sufficiently small. \medskip \\
{\it Case 2 :}  Assume that $t \in [0, T)$ is such that
$$||\nabla \phi (t)||_2^2 > \frac{4N \sigma E_0(v)}{\mu (N \sigma -2) }.$$
Since $||\nabla \phi (t)||_2^2 \leq ||\phi (t)||_2 ||\Delta \phi (t)||_2$, using \eqref{blevt}, we have that
\begin{align*} \label{locvirial2}
\begin{split}
\dfrac{d}{dt}M_{\varphi_R} [ \phi (t) ] &\leq 4 N\sigma E_0(v) - \frac{(2N\sigma -8)\gamma}{||\phi (t)||_2^2} \|\nabla \phi (t)\|_2^4  - (2N \sigma -4)\mu ||\nabla \phi(t)||_2^2\\
& +   O\left(\frac{1}{R^{4}}+ \dfrac{\|\nabla \phi (t)\|_2^2}{R^2}+ \dfrac{\|\nabla \phi (t)\|_2^\sigma}{R^{\sigma (N-1)}}+\dfrac{\mu}{R^2} \right) \\
&\leq  - (N\sigma -2)\mu \|\nabla \phi (t)\|_2^2 - \frac{(2N\sigma -8)\gamma}{||\phi (0)||_2^2} \|\nabla \phi (t)\|_2^4   \\
& + O\left(\frac {1}{R^{4}}+ \dfrac{\|\nabla \phi (t)\|_2^2}{R^2}+ \dfrac{\|\nabla \phi (t)\|_2^\sigma}{R^{\sigma (N-1)}}+\dfrac{\mu}{R^2} \right) .
\end{split}
\end{align*}
Taking $R$ large enough and using our assumptions that $\sigma \leq 2$ if $N \sigma =4$ and $\sigma \leq 4$ if $\sigma N >4$ we deduce that
\begin{equation} \label{esti2}
\dfrac{d}{dt}M_{\varphi_R} [ \phi (t) ]  \leq -\dfrac{ (N \sigma  -2)\mu}{2}   \|\nabla \phi (t)\|_2^2.
\end{equation}
Combining the estimates \eqref{esti1}-\eqref{esti2}, we see that there exists a $\delta>0$ such that (\ref{thmsecondstep}) holds. Now, since
$$ M_{\varphi_R} [ \phi (t) ] = M_{\varphi_R} [ \phi (0) ] + \int_{0}^{t_1} \dfrac{d}{ds}M_{\varphi_R} [ \phi (s) ] ds,$$
the inequality \eqref{initial} follows
from the estimate
$$ \big| \dfrac{d}{dt}M_{\varphi_R} [ \phi (t) ] \big| \geq \min \left\{ 7a, \frac{(N \sigma -2)}{2}||\nabla \phi(t)||_2 \right\}.$$
\medskip \\
{\bf Fourth step :} We now conclude that the solution $\phi(t)$ to \eqref{4nlsdis} with the initial datum $\phi(0)=v$ blows up. Here we adapt another argument from \cite{BoLe}. Suppose by contradiction that $T=\infty$, then integrating \eqref{thmsecondstep} on $[t_1 ,t]$, and taking into consideration \eqref{initial}, we have that
$$
M_{\varphi_R} [ \phi (t) ] \leq  -\delta \int_{t_1}^t \|\nabla \phi (s)\|_2^2 ds.\
$$
Now using Cauchy-Schwarz's inequality, we get from the definition of $M_{\varphi_R} [ \phi (t) ] $ given by \eqref{localvirial} that
$$
|M_{\varphi_R}[\phi (t)] | \leq 2 \|\nabla \varphi_R\|_{\infty} \|\phi(t)\|_2 \|\nabla \phi(t)\|_2 \leq C \|\nabla \phi (t)\|_2.
$$
Thus for some $\tau >0$,
\begin{equation}\label{control}
M_{\varphi_R}[\phi (t)] \leq - \tau \int_{t_1}^t | M_{\varphi_R}[\phi (s)] |^2 ds.
\end{equation}
Setting $z(t):=\int_{t_1}^t |M_{\varphi_R}[\phi (s)] |^2 ds $, we obtain from \eqref{control} that
$$
z'(t)\geq \tau^2 z(t)^2.
$$
Integrating this differential inequality on $[t_1, t]$, we get
$$
z(t) \geq \frac{z(t_1)}{1-\tau^2(t-t_1)z(t_1)},
$$
and thus $M_{\varphi_R}[\phi (t)]\rightarrow -\infty$ as $t$ tends to some finite time $t^\ast$. Therefore the solution $\phi(t)$ cannot exist globally and by the blow-up alternative recalled in Lemma \ref{exloc} this ends the proof in the case $\mu\ne 0$.

\medbreak

Assume now that $\mu = 0$. In this case, the first two steps remain unchanged and proceeding as in the third step, one can show that  there exist a constant $\delta>0$ such that
\begin{equation}\label{thmsecondstepmu}
\dfrac{d}{dt}M_{\varphi_R} [\phi (t)]\leq - \delta \|\Delta \phi (t)\|_2^2 \ \text{for} \ t \in [0,T),
\end{equation}
and a $t_1 \geq 0$ such that
\begin{equation}\label{initialmu}
M_{\varphi_R} [\phi (t)]<0  \ \text{for} \ t \geq t_1.
\end{equation}
Indeed, the conclusion follows if we substitute \eqref{thmdect} by
$$||\Delta \phi (t)||_2^2 \leq \frac{4N \sigma E(v)}{(N \sigma -4) \gamma}.$$
Now, integrating \eqref{thmsecondstepmu} on $[t_1 ,t]$ and using \eqref{initialmu}, we have
$$M_{\varphi_R} [ \phi (t) ] \leq  -\delta \int_{t_1}^t \|\Delta \phi (s)\|_2^2 dx .$$
Using Cauchy-Schwarz inequality, we get
$$|M_{\varphi_R}[\phi (t)] |^4 \leq C \|\Delta \phi (t)\|_2^2,$$
for some constant $C$ depending on $R$ and $v$. Combining the previous inequalities and setting $z(t)= \int_{t_1}^t |M_{\varphi_R}[\phi (s)] |^4 ds $, we obtain
$$z ' (t) \geq C z (t)^4.$$
Integrating this equation, we deduce that $M_{\varphi_R}[\phi (t)]\rightarrow -\infty$ when $t$ tends to some finite time $t^\ast$. This concludes the proof in the case $\mu=0$ and $\sigma N > 4$. 
\end{proof}

\medbreak

\begin{proof}[Proof of Theorem \ref{mainthm11}]

We first consider the case $\mu=0$ and $\sigma N=4$. Since $Q(\phi (t))<-a$, we can choose $\eta>0$ sufficiently small and $R\geq 1$ sufficiently large to deduce from \eqref{virlimitcase} that
\begin{equation}
\label{lasteq}
\dfrac{d}{dt}M_{\varphi_R} [\phi (t)]\leq -4 a.
\end{equation}
Proceeding as in \cite{BoLe}, suppose by contradiction that $T=\infty$. Then, there exists $t_1\geq 0$ such that 
$M_{\varphi_R} [\phi (t)] <0$, for all $t\geq t_1$. Using the Cauchy-Schwarz's inequality and integrating \eqref{lasteq} between $[t_1 ,t]$, we find
$$-\|\nabla \varphi_R \|_{\infty} \|\phi (t) \|_2^{3/2} \|\Delta \phi (t)\|_2^{1/2}\leq M_{\varphi_R}[\phi (t)]\leq - 4a (t-t_1).$$
Therefore, we see that either $\phi (t)$ blows up in finite time or that
$$\|\Delta \phi (t)\|_2^2 \geq C (t-t_1)^2,\ for\ all\ t\geq t_1 .$$

Consider now the case $\sigma > 4$. If $\|\nabla \phi (t)\|_2$ is unbounded, then $\|\Delta \phi (t)\|_2$ is unbounded and $\phi(t)$ blows up either in finite or infinite time. If $\|\nabla \phi (t)\|_2$ is bounded, then \eqref{blevt} implies that  
\eqref{lasteq} holds for $R\geq 1$ sufficiently large and we conclude as above. 
\end{proof}


\end{document}